\title[Grothendieck's generic freeness lemma]{An elementary and constructive
proof of Grothendieck's generic freeness lemma}
\author{Ingo Blechschmidt}
\address{Max Planck Institute for Mathematics in the Sciences \\
Inselstraße 22 \\
04103 Leipzig, Germany}
\email{ingo.blechschmidt@mis.mpg.de}
\theoremstyle{definition}
\newtheorem{defn}{Definition}[]
\theoremstyle{plain}
\newtheorem{prop}[defn]{Proposition}
\newtheorem{lemma}[defn]{Lemma}
\newtheorem{thm}[defn]{Theorem}
\theoremstyle{remark}
\newcommand{\aaa}{\mathfrak{a}}
\newcommand{\mmm}{\mathfrak{m}}
\newcommand{\I}{\mathcal{I}}
\newcommand{\J}{\mathcal{J}}
\newcommand{\E}{\mathcal{E}}
\newcommand{\B}{\mathcal{B}}
\renewcommand{\O}{\mathcal{O}}
\newcommand{\defeq}{\vcentcolon=}
\DeclareMathOperator{\Spec}{Spec}
\newcommand{\stacksproject}[1]{\cite[{\href{https://stacks.math.columbia.edu/tag/#1}{Tag~#1}}]{stacks-project}}
\begin{document}

\begin{abstract}
  We present a new and direct proof of Grothendieck's generic freeness
  lemma in its general form. Unlike the previously published proofs, it does not
  proceed in a series of reduction steps and is fully constructive, not
  using the axiom of choice or even the law of excluded middle. It was
  found by unwinding the result of a general topos-theoretic technique.
\end{abstract}

\maketitle
\thispagestyle{empty}

\noindent
We prove Grothendieck's generic freeness lemma in the
following form.

\begin{thm}\label{thm:algebraic}Let~$A$ be a reduced ring (commutative, with unit). Let~$B$ be
an~$A$-algebra of finite type. Let~$M$ be a finitely generated~$B$-module.
If~$f = 0$ is the only element of~$A$ such that
\begin{enumerate}
\item the~$A[f^{-1}]$-modules $B[f^{-1}]$ and $M[f^{-1}]$ are free,
\item the~$A[f^{-1}]$-algebra~$B[f^{-1}]$ is of finite presentation and
\item the~$B[f^{-1}]$-module~$M[f^{-1}]$ is finitely presented,
\end{enumerate}
then~$1 = 0$ in~$A$.
\end{thm}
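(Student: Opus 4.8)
The picture behind the statement is that a reduced ring embeds into the product of its fraction fields at the minimal primes, over each of which the assertion is classical (modules over a field are free, finitely generated algebras over a field are Noetherian and so finitely presented), and the somewhat unusual phrasing is precisely the constructive shadow of saying that the locus of \emph{good} elements---those satisfying (1)--(3)---meets every irreducible component of $\Spec A$. So the plan is to turn this spreading-out argument into an elementary induction. The basic preliminary fact is that the set of good elements is stable under localisation of $A$: if $f$ is good and $g \in A$ is arbitrary then $fg$ is good, since all three conditions survive a further localisation; and conversely, if $h \in A[g^{-1}]$ is good for the localised datum $(A[g^{-1}],B[g^{-1}],M[g^{-1}])$ then, writing $h = a/g^{k}$, the element $ga \in A$ is good for $(A,B,M)$, because inverting $g$ and then $h$ amounts to inverting $ga$. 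Hence the hypothesis of the theorem descends along every localisation $A \to A[g^{-1}]$: if $f=0$ is the only good element of $A$, the same is true of $A[g^{-1}]$. This is what lets the induction localise freely.

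The case in which $B$ is module-finite over $A$ (in particular $n=0$) is now elementary, and I would treat it by induction on the number of module generators. Fix generators of $B$ and of $M$ as $A$-modules. For any relation $\sum_{i} a_{i}e_{i} = 0$ among the chosen generators of $M$ with $a_{i}\in A$, and any index $i$, the generator $e_{i}$ becomes redundant over $A[a_{i}^{-1}]$; so by the preliminary fact and the inductive hypothesis $1=0$ in $A[a_{i}^{-1}]$, i.e.\ $a_{i}$ is nilpotent, hence $a_{i}=0$ since $A$ is reduced. The same argument applies to the relations defining $B$ over $A$ and to those expressing the multiplication of $B$. Thus there are no nontrivial relations: $B$ and $M$ are free over $A$ with the evident finite presentations, $1$ is good, and so $1=0$.

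For the general case I would induct on the number $n$ of algebra generators of $B$. Write $B$ as a quotient of $A[x_{1},\dots,x_{n}]$ and put $C = A[x_{1},\dots,x_{n-1}]$. Filter both $B$ and $M$ by the finitely generated $C$-submodules spanned by the $x_{n}^{j}$-multiples (for $j\le d$) of a fixed finite generating set; the associated graded pieces are finitely generated $C$-modules, multiplication by $x_{n}$ induces surjections between consecutive ones, and the colimit $N$ over $d$ is again a finitely generated $C$-module, to which the inductive hypothesis applies. If $g$ is good for the datum $(A,C,N)$, then the finite presentation of $N$ over $C[g^{-1}]$ forces the filtration of the relations over $A[g^{-1}]$ to stabilise after finitely many steps, with \emph{free} graded pieces beyond that point; a nested application of the inductive hypothesis to the finitely many bottom layers---which live over a polynomial ring in $n-1$ variables---then shows, after a harmless further localisation, that $g$ is good for $(A,B,M)$, whence $g=0$ by the theorem's hypothesis and reducedness. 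This closes the induction, and together with the module-finite case (used to handle the defining relations of $B$, via the usual passage from an algebra relation with invertible leading coefficient to a module-finiteness statement) it gives the theorem.

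\textbf{The main obstacle} is concentrated in this last step. Over a non-Noetherian reduced ring there is no ascending chain condition to force the filtration to stabilise, and one cannot constructively separate off the possibly-non-free bottom layers by inverting a single nonzero element; the stabilisation, the freeness over $A[g^{-1}]$, and the finite presentation over $B[g^{-1}]$ must be produced simultaneously by a careful, relativised Hilbert-basis argument run through the ``$f=0$ is the only good element $\Rightarrow 1=0$'' reformulation, so that the unboundedly many auxiliary niceness conditions it generates collapse correctly. This is exactly the bookkeeping that the topos-theoretic proof performs automatically and that has to be unwound by hand.
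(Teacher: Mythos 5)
Your preliminary lemma (stability of good elements under localisation and descent of the hypothesis to $A[g^{-1}]$) and your treatment of the module-finite case are fine, but the inductive step over the number $n$ of algebra generators — the only place where the real difficulty lives — is not actually carried out, and your closing paragraph concedes as much. Concretely: from $g$ being good for $(A,C,N)$, with $C=A[x_1,\dots,x_{n-1}]$ and $N$ the colimit of the surjective system of graded pieces, you can indeed deduce that the graded pieces stabilise to $N[g^{-1}]$ after finitely many steps over $C[g^{-1}]$ (finite presentation of the colimit bounds the kernel of $Q_0[g^{-1}]\to N[g^{-1}]$). But the finitely many bottom layers are merely finitely generated $C$-modules, and nothing makes them free over $A[g^{-1}]$ or finitely presented over $C[g^{-1}]$ after inverting $g$ alone. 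Your proposed remedy, ``a harmless further localisation,'' is not harmless: the theorem's hypothesis only allows you to conclude that the element you actually invert is zero, so inverting $gh$ yields $gh=0$, not $g=0$; and the inductive hypothesis for $n-1$ variables is itself only a conditional statement (``if $0$ is the only good element then $1=0$''), not a device that produces a single element whose inversion makes the bottom layers nice. Recovering $g=0$ from the family of conclusions $gh=0$ (for varying auxiliary $h$) requires precisely the extra machinery you defer to ``a careful, relativised Hilbert-basis argument'' — that is, the heart of the proof is missing, and what you have is the classical dévissage plan (which in the literature is powered by Noetherian hypotheses at exactly this point) rather than a proof.

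For comparison, the paper avoids the reduction to $n-1$ variables and the filtration by powers of $x_n$ altogether. It takes the entire family $(x_1^{i_1}\cdots x_n^{i_n}v_\ell)$ as an $A$-module generating family of $M$, considers staircase-shaped (``good'') subfamilies whose complements are upward closed, and runs a single well-founded induction on their shapes, justified by Dickson's lemma: given an $A$-linear relation, one inverts the coefficient of the largest index, expresses that generator in terms of smaller ones, removes it together with its upward cone to obtain a strictly smaller good subfamily, and the induction hypothesis forces the localised ring to be trivial, hence the coefficient to vanish by reducedness. Thus the good family is a basis, the finitely many corner relations yield finite presentations of $M$ over $B$ and of $B$ over $A$, and the hypothesis applied to $f=1$ finishes. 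This direct combinatorial induction is exactly what replaces the stabilisation-and-bottom-layer bookkeeping that blocks your route; if you wish to salvage your approach, you would need something like a constructive ``weak Noetherianity'' of $A[x_1,\dots,x_{n-1}]$ to collapse the auxiliary localisations, which is a substantial additional development rather than a finishing touch.
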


Previously known proofs either only cover the case where~$A$ is a Noetherian
integral domain, where one can argue by \emph{dévissage} (see for
instance~\cite[Lemme~6.9.2]{ega-4-2},
\cite[Thm.~24.1]{matsumura:commutative-ring-theory} or
\cite[Thm.~14.4]{eisenbud:commutative-algebra}), or proceed in a series of
intermediate steps, reducing to that case (see for
instance~\cite{staats:generic-freeness} or~\stacksproject{051Q}); but in fact,
a direct proof is possible and shorter. The new proof unveils a certain
combinatorial aspect to Grothendieck's generic freeness lemma, does not require
any advanced prerequisites in commutative algebra and does not use the axiom of
choice or the law of excluded middle. It is purely element-based, not referring
to ideals of~$A$, and doesn't use Noether normalization.

Grothendieck's generic freeness lemma is often presented in contrapositive form
or in the following geometric variant:

\begin{thm}\label{thm:geometric}Let~$A$ be a reduced ring. Let~$B$ be
an~$A$-algebra of finite type. Let~$M$ be a finitely generated~$B$-module. Then
the space~$\Spec(A)$ contains a dense open~$U$ such that over~$U$,
\begin{enumerate}
\item[(a)] $B^\sim$ and~$M^\sim$ are locally free as sheaves of~$A^\sim$-modules,
\item[(b)] $B^\sim$ is of finite presentation as a sheaf of~$A^\sim$-algebras and
\item[(c)] $M^\sim$ is finitely presented as a sheaf of~$B^\sim$-modules.
\end{enumerate}
\end{thm}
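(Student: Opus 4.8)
The plan is to deduce Theorem~\ref{thm:geometric} from Theorem~\ref{thm:algebraic} by a patching argument. Let $S \subseteq A$ be the set of those elements~$f$ for which conditions~(1), (2) and~(3) of Theorem~\ref{thm:algebraic} hold, write $D(f) \subseteq \Spec(A)$ for the basic open $\Spec(A[f^{-1}])$, and set $U \defeq \bigcup_{f \in S} D(f)$; I claim this open~$U$ works. One half is routine: $(D(f))_{f \in S}$ is an open cover of~$U$, and over $D(f)$ the sheaves $B^\sim$ and $M^\sim$ restrict to the quasi-coherent sheaves associated to the $A[f^{-1}]$-modules $B[f^{-1}]$ and $M[f^{-1}]$, which by~(1)--(3) are, respectively, free, of finite presentation as a sheaf of algebras, and finitely presented as a sheaf of modules. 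Since each of the properties~(a), (b), (c) is local on the base, it descends from this cover to~$U$. (Throughout, stability of reducedness, finite type and finite generation under localization is used without comment.)

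The substance is to show that~$U$ is dense. Concretely --- and this is the pointfree reading of density, the one to use constructively --- I must show that every $g \in A$ with $gf = 0$ for all $f \in S$, equivalently every~$g$ with $D(g) \cap U = \emptyset$, is nilpotent, hence (as $A$ is reduced) zero. So fix such a~$g$ and pass to $A' \defeq A[g^{-1}]$ (a reduced ring), $B' \defeq B[g^{-1}]$ (an $A'$-algebra of finite type) and $M' \defeq M[g^{-1}]$ (a finitely generated $B'$-module). I will check that $\bar f = 0$ is the only element of~$A'$ satisfying conditions~(1)--(3) relative to $A'$, $B'$, $M'$; Theorem~\ref{thm:algebraic} applied to $A', B', M'$ then yields $1 = 0$ in~$A'$, that is, $g$ nilpotent in~$A$, as wanted.

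To verify this, let $\bar f \in A'$ satisfy~(1)--(3) over~$A'$ and write $\bar f = a/g^k$ with $a \in A$. Since~$g$ is already a unit in~$A'$, inverting~$\bar f$ in~$A'$ amounts to inverting~$a$, giving canonical isomorphisms $A'[\bar f^{-1}] \cong A[(ga)^{-1}]$, $B'[\bar f^{-1}] \cong B[(ga)^{-1}]$ and $M'[\bar f^{-1}] \cong M[(ga)^{-1}]$ compatible with the algebra and module structures. Hence conditions~(1)--(3) for~$\bar f$ relative to $A', B', M'$ are literally conditions~(1)--(3) for~$ga$ relative to $A, B, M$, so $ga \in S$; by the hypothesis on~$g$ this gives $g \cdot (ga) = 0$, i.e.\ $g^2 a = 0$ in~$A$, forcing $a = 0$ in~$A'$ and hence $\bar f = a/g^k = 0$.

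The one genuine obstacle is this density step, and within it the idea of applying Theorem~\ref{thm:algebraic} not to~$A$ itself but to the localization $A[g^{-1}]$ and then recognising the iterated localization $A[g^{-1}][\bar f^{-1}]$ as the plain localization $A[(ga)^{-1}]$ of~$A$. Patching~(a)--(c) along the cover and openness of~$U$ are formalities. If a quasi-compact dense~$U$ is preferred, one expects the proof of Theorem~\ref{thm:algebraic} to furnish finitely many such~$f$ whose basic opens already cover a dense open, but this is not needed for the statement as phrased.
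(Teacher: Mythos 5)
Your proposal is correct and takes essentially the same route as the paper: define $U$ as the union of all basic opens $D(f)$ for which conditions~(1)--(3) hold, note that (a)--(c) are local so they hold over~$U$, and establish density by applying Theorem~\ref{thm:algebraic} to the localized datum $(A[g^{-1}], B[g^{-1}], M[g^{-1}])$ for any~$g$ with $D(g) \cap U = \emptyset$, concluding that~$g$ is nilpotent and $D(g) = \emptyset$. The only difference is that you spell out the verification (via $A[g^{-1}][\bar f^{-1}] \cong A[(ga)^{-1}]$) that the hypothesis of Theorem~\ref{thm:algebraic} passes to the localization, a step the paper leaves implicit.
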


Theorem~\ref{thm:geometric} immediately follows from
Theorem~\ref{thm:algebraic} by defining~$U$ as the union of all those basic
opens~$D(f)$ such that~(1),~(2) and~(3) hold. It is clear that~(a),~(b)
and~(c) hold over~$U$, and~$U$ is dense for if~$V$ is an arbitrary open
such that~$U \cap V = \emptyset$, the open~$V$ is itself empty: Let~$h \in A$ be
such that~$D(h) \subseteq V$. The hypothesis implies the assumptions of
Theorem~\ref{thm:algebraic} for the datum~$(A[h^{-1}], B[h^{-1}], M[h^{-1}])$.
Thus~$1 = 0 \in A[h^{-1}]$, so~$h$ is nilpotent and~$D(h) = \emptyset$.

The new proof was found using a general topos-theoretic technique which we
believe to be useful in other situations as well. This technique allows to view
reduced rings and their modules from a different point of view, one from which
reduced rings look like fields. Since Grothendieck's generic freeness
is trivial for fields, this technique yields a trivial proof for reduced rings.
The proof presented here was obtained by unwinding the topos-theoretic proof,
yielding a self-contained argument without any references to topos theory.
We refer readers who want to learn about this technique to a forthcoming companion 
paper~\cite{blechschmidt:wlog}.

\textbf{Acknowledgments.} The proof presented here was prompted by a question by
user~HeinrichD on MathOverflow~\cite{mo:kernel} and greatly
benefited from discussions with Martin Brandenburg, who employed the constructive
version in a paper of his~\cite{brandenburg:schur}. We are grateful to Thierry
Coquand and Peter Schuster for valuable advice, to Giuseppe Rosolini for
comments regarding the presentation of the paper, and to Marc
Nieper-Wißkirchen for carefully guiding our PhD studies~\cite{blechschmidt:phd} at the University of
Augsburg, where most of the work for this paper was carried out.

\section{The proof of the finitely-generated case}

The following proposition is just a special instance of Grothendieck's generic
freeness lemma. Its proof is easier and shorter than the proof of the general
case, which is why we present it here. The general proof
will not refer to this one.

\begin{prop}Let~$A$ be a reduced ring. Let~$M$ be a finitely
generated~$A$-module. If~$f = 0$ is the only element of~$A$ such
that~$M[f^{-1}]$ is a finite free~$A[f^{-1}]$-module, then~$1 = 0$ in~$A$.
\end{prop}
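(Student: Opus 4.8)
The plan is to argue by induction on the number $n$ of generators of $M$, simultaneously over all reduced rings $A$, keeping the argument constructive. The base case $n = 0$ is immediate: if $M = 0$ then $M$ is already free of rank $0$ over $A$ itself, so the hypothesis forces $1 = 0$. For the inductive step, suppose $M$ is generated by $m_1, \dots, m_n$ with $n \geq 1$. The idea is to look at the "leading" generator $m_n$ and split into cases according to whether it contributes a free direct summand or is torsion-like; since we are constructive, this case split must be made locally on $\Spec(A)$, i.e. after inverting suitable elements of $A$.

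First I would consider the annihilator behavior of $m_n$. Concretely, form the submodule $N = A m_n \subseteq M$ and the quotient $M/N$, which is generated by $n-1$ elements. For any $f \in A$, if $M[f^{-1}]$ is finite free over $A[f^{-1}]$, one would like to deduce that after inverting a further element, both $N$ and $M/N$ become free. The key algebraic input is that over the reduced ring $A$, an element $a \in A$ generates a free rank-one submodule of a free module exactly when $a$ is not a zerodivisor, and whether $a$ is a zerodivisor can be tested locally. So I would split $\Spec(A)$ into the locus where a relevant determinant or annihilator element $a$ is invertible and the complementary closed locus where $a = 0$ (passing to $A/(a)$, which is again reduced). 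On the first piece, $m_n$ spans a free summand and we can strip it off, reducing to $n-1$ generators of $M[a^{-1}]$ over $A[a^{-1}]$ and applying the inductive hypothesis. On the second piece, $m_n$ is annihilated by $1$ modulo the new relations, so $M/(a)M$ is really generated by $n-1$ elements over the reduced ring $A/(a)$, and again we invoke induction.

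The mechanism that glues these two cases back together is the standard one: if $1 = 0$ holds in both $A[a^{-1}]$ and $A/(a)$, then $a$ is both invertible and nilpotent in $A$, hence $1 = 0$ in $A$ — and crucially this uses no excluded middle, only that $\Spec$ of a reduced ring is covered by $D(a)$ and $V(a)$ in the constructive sense that a module vanishing after inverting $a$ and modulo $a$ vanishes outright (for reduced $A$, $a$ nilpotent means $a = 0$). The hypothesis of the proposition is inherited by localizations and by quotients of the form $A/(a)$ because reducedness is preserved and freeness localizes well, so the induction is legitimate.

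The main obstacle I anticipate is the constructive handling of the case split: classically one would just say "either $m_n$ is torsion or it isn't," but constructively I must produce an explicit element $a \in A$ witnessing the dichotomy and verify that the hypotheses of the proposition genuinely descend to $A[a^{-1}]$ and to $A/(a)$. Identifying the right $a$ — plausibly extracted from a presentation of $M[f^{-1}]$ as a free module, via the coordinates expressing $m_n$ in a chosen basis — and checking that the induction hypothesis applies on each side without circularity is the delicate part; the rest is routine localization bookkeeping.
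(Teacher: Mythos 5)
Your induction framework and the idea of localizing at distinguished elements are on the right track, but two of your key moves do not survive scrutiny. First, passing to the quotient $A/(a)$ does not preserve reducedness: for $A = k[x]$ and $a = x^2$ the quotient has nilpotents, and nothing in your setup forces $a$ to generate a radical ideal. Second, and more fundamentally, the hypothesis of the proposition does \emph{not} descend to quotients. For a localization the descent works: given $f' = f/a^k \in A[a^{-1}]$ with $(M[a^{-1}])[(f')^{-1}]$ free, this localization equals $M[(fa)^{-1}]$, so the original hypothesis gives $fa = 0$ and hence $f' = 0$ in $A[a^{-1}]$. But given $\bar f \in A/(a)$ with $(M/aM)[\bar f^{-1}]$ free over $(A/(a))[\bar f^{-1}]$, there is no localization of $M$ over $A$ to which you can apply the original hypothesis: $(A/(a))[\bar f^{-1}]$ is not of the form $A[g^{-1}]$. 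So you cannot invoke the induction hypothesis on the closed locus $V(a)$, and the intended gluing of $D(a)$ and $V(a)$ falls apart. (The gluing principle itself --- that $1 = 0$ in both $A[a^{-1}]$ and $A/(a)$ forces $1 = 0$ in $A$ --- is sound; the problem is that you cannot establish the $A/(a)$ half.)

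The paper's argument sidesteps all of this by never quotienting, never choosing a summand, and never performing a case split. Given the generating family $(v_1, \ldots, v_m)$, it takes an arbitrary relation $\sum_i a_i v_i = 0$ and localizes at each coefficient $a_i$: over $A[a_i^{-1}]$ the generator $v_i$ becomes redundant, so $M[a_i^{-1}]$ has a generating family of length $m-1$; the induction hypothesis (which, as just noted, does descend to localizations) forces $1 = 0$ in $A[a_i^{-1}]$, hence $a_i = 0$ by reducedness. This shows the original generators are linearly independent, so $M$ is already free over $A$, and the hypothesis applied with $f = 1$ finishes. No determinant, annihilator, or closed complement is needed --- the proof stays entirely inside the lattice of basic open localizations of $A$, which is exactly what makes the descent of the hypothesis unproblematic.
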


\begin{proof}We proceed by induction on the length of a given generating family
of~$M$. Let~$M$ be generated by~$(v_1,\ldots,v_m)$.

We show that the family~$(v_1,\ldots,v_m)$ is linearly independent. Let~$\sum_i
a_i v_i = 0$. Over~$A[a_i^{-1}]$, the vector~$v_i \in M[a_i^{-1}]$ is a linear
combination of the other generators. Thus~$M[a_i^{-1}]$ can be generated as
an~$A[a_i^{-1}]$-module by fewer than~$m$ generators. The induction hypothesis,
applied to this module, yields that~$1 = 0$ in~$A[a_i^{-1}]$. Since~$A$ is
reduced, this amounts to~$a_i = 0$.

We finish by using the assumption for~$f = 1$.
\end{proof}

We remark that the proof takes a somewhat curious course: Our goal is to
verify~$1 = 0$, but as an intermediate step we verify that~$M$ is free, which
after the fact will be a trivial statement. The general proof in the next
section will have a similar style. This approach is reminiscient of
Richman's uses of trivial rings~\cite{richman:trivial-rings}.

\section{The proof of the general case}

\begin{proof}[Proof of Theorem~\ref{thm:algebraic}]
Let~$B$ be generated by~$(x_1,\ldots,x_n)$ as an~$A$-algebra and
let~$M$ be generated by~$(v_1,\ldots,v_m)$ as a~$B$-module. We endow the sets
\begin{align*}
  \I &\defeq \{ (i_1,\ldots,i_n) \,|\, i_1,\ldots,i_n \geq 0 \}
  \quad\text{and} \\
  \J &\defeq \{ (\ell, i_1,\ldots,i_n) \,|\, 1 \leq \ell \leq m, i_1,\ldots,i_n \geq 0 \}
\end{align*}
with the lexicographic order. The family~$(w_J)_{J \in \J} \defeq (x_1^{i_1}
\cdots x_n^{i_n} v_\ell)_{(\ell,i_1,\ldots,i_n) \in \J}$ generates~$M$ as
an~$A$-module, and we will call a subfamily~$(w_J)_{J \in \J' \subseteq \J}$
\emph{good} if and only if for all~$J \in \J$, the vector~$w_J$ is a linear
combination of the vectors~$(w_{J'})_{J' \in \J', J' \preceq J}$,
and if~$(\ell,i_1,\ldots,i_n) \not\in \J'$ implies~$(\ell,k_1,\ldots,k_n) \not\in
\J'$ for all~$k_1 \geq i_1, \ldots, k_n \geq i_n$. Figure~\ref{fig:good} shows
how a good generating family can look like.
Similarly, we define when a subfamily of the canonical generating
family~$(x_1^{i_1} \cdots x_n^{i_n})_{(i_1,\ldots,i_n) \in \I}$ of~$B$ is good
(which is just the special case~$m = 1$).

We then proceed by induction on the
shapes of a given good generating family~$(w_J)_{J \in \J'}$ for~$M$ and a given
good generating family~$(s_I)_{I \in \I'}$ for~$B$, starting with the canonical ones. It is
reasonably obvious that this induction is well-founded; the formal statement
that it is so is known as \emph{Dickson's Lemma} (see, for
instance,~\cite[Thm.~1.1]{veldman:kruskal}).

We show that~$(w_J)_{J \in \J'}$ is a basis of~$M$ by verifying linear
independence. Thus let~$\sum_J a_J w_J = 0$ in~$M$. We show that all
coefficients in this sum are zero, starting with the largest
appearing index~$J$: In the module~$M[a_J^{-1}]$ over the localized ring~$A[a_J^{-1}]$,
the vector~$w_J$ is a linear combination of generators with
smaller index. Removing~$w_J = x_1^{i_1} \cdots x_n^{i_n} v_\ell$ and also all
vectors~$x_1^{k_1} \cdots x_n^{k_n} v_\ell$ where~$k_1 \geq i_1, \ldots,
k_n \geq i_n$, we obtain a subfamily which is still good for
the localized module. The induction hypothesis, applied
to~$A[a_J^{-1}]$ and its module~$M[a_J^{-1}]$, therefore implies that $A[a_J^{-1}] = 0$.
Thus $a_J = 0$ since~$A$ is reduced.

Similarly, we show that the given good generating family~$(s_I)_{I \in \I'}$ is
a basis. Thus~$M$ and~$B$ are free over~$A$. We fix for any corner~$J$
of~$\J'$, as indicated in Figure~\ref{fig:good},
a way of expressing~$w_J = \sum_K a_{JK} w_K$ as a linear combination of
generators with strictly smaller index. Let~$\widehat w_{(\ell,i_1,\ldots,i_n)}
\defeq x_1^{i_1} \cdots x_n^{i_n} V_\ell$ in the free~$B$-module~$B\langle
V_1,\ldots, V_m\rangle$. The canonical map
\[ \widehat M \defeq B\langle V_1,\ldots, V_m\rangle/
  (\widehat w_J - \textstyle\sum_K a_{JK} \widehat w_K)_{\text{$J$ corner of~$\J'$}} \longrightarrow M
\]
is trivially well-defined and surjective. It is also injective, since any
element of~$\widehat M$ can be written as an~$A$-linear combination of the
vectors~$(\widehat w_J)_{J \in \J'}$ by employing the corner relations a finite
number of times.  Therefore~$M$ is finitely presented as a~$B$-module.

In a similar vein, a quotient algebra of~$A[X_1,\ldots,X_n]$, where
we mod out by a suitable ideal with as many generators as corners of~$\I'$, is
isomorphic to~$B$. Thus~$B$ is finitely presented as an~$A$-algebra.

We finish by using the assumption for~$f = 1$.
\end{proof}

\begin{figure}[t]
  \captionsetup{width=.9\linewidth}
  \begin{tikzpicture}[scale=0.9]
    \draw[step=1cm,gray,very thin] (0,0) grid (8,8);

    \draw
      (0,8) -- (0,0) -- (8,0);

    \fill[pattern=north east lines,pattern color=gray,opacity=0.5]
      (2,8) -- (2,5) -- (5,5) -- (5,3) -- (6,3) -- (6,2) -- (8,2) -- (8,8);
    \draw
      (2,8) -- (2,5) -- (5,5) -- (5,3) -- (6,3) -- (6,2) -- (8,2);

    \fill[fill=black] (2,5.25) -- (2,5) -- (2.25,5) -- (2.25,5.25);
    \fill[fill=black] (5,3.25) -- (5,3) -- (5.25,3) -- (5.25,3.25);
    \fill[fill=black] (6,2.25) -- (6,2) -- (6.25,2) -- (6.25,2.25);

    \fill[fill=blue!40!white,opacity=0.5]
      (0,8) -- (0,0) -- (4,0) -- (4,3) -- (3,3) -- (3,5) -- (2,5) -- (2,8);

    \fill[fill=red!40!white,opacity=0.5]
      (3,3) -- (4,3) -- (4,4) -- (3,4);

    \begin{scope}[yshift=4cm, xshift=4cm]
      \matrix[matrix of nodes,nodes={inner sep=0pt,text width=0.9cm,align=center,minimum height=0.9cm}]{
        \tiny $x^0y^7v_1$ & \tiny $x^1y^7v_1$ & \tiny $x^2y^7v_1$ & \tiny $x^3y^7v_1$ & \tiny $x^4y^7v_1$ & \tiny $x^5y^7v_1$ & \tiny $x^6y^7v_1$ & \tiny $x^7y^7v_1$ & \\
        \tiny $x^0y^6v_1$ & \tiny $x^1y^6v_1$ & \tiny $x^2y^6v_1$ & \tiny $x^3y^6v_1$ & \tiny $x^4y^6v_1$ & \tiny $x^5y^6v_1$ & \tiny $x^6y^6v_1$ & \tiny $x^7y^6v_1$ & \\
        \tiny $x^0y^5v_1$ & \tiny $x^1y^5v_1$ & \tiny $x^2y^5v_1$ & \tiny $x^3y^5v_1$ & \tiny $x^4y^5v_1$ & \tiny $x^5y^5v_1$ & \tiny $x^6y^5v_1$ & \tiny $x^7y^5v_1$ & \\
        \tiny $x^0y^4v_1$ & \tiny $x^1y^4v_1$ & \tiny $x^2y^4v_1$ & \tiny $x^3y^4v_1$ & \tiny $x^4y^4v_1$ & \tiny $x^5y^4v_1$ & \tiny $x^6y^4v_1$ & \tiny $x^7y^4v_1$ & \\
        \tiny $x^0y^3v_1$ & \tiny $x^1y^3v_1$ & \tiny $x^2y^3v_1$ & \tiny $x^3y^3v_1$ & \tiny $x^4y^3v_1$ & \tiny $x^5y^3v_1$ & \tiny $x^6y^3v_1$ & \tiny $x^7y^3v_1$ & \\
        \tiny $x^0y^2v_1$ & \tiny $x^1y^2v_1$ & \tiny $x^2y^2v_1$ & \tiny $x^3y^2v_1$ & \tiny $x^4y^2v_1$ & \tiny $x^5y^2v_1$ & \tiny $x^6y^2v_1$ & \tiny $x^7y^2v_1$ & \\
        \tiny $x^0y^1v_1$ & \tiny $x^1y^1v_1$ & \tiny $x^2y^1v_1$ & \tiny $x^3y^1v_1$ & \tiny $x^4y^1v_1$ & \tiny $x^5y^1v_1$ & \tiny $x^6y^1v_1$ & \tiny $x^7y^1v_1$ & \\
        \tiny $x^0y^0v_1$ & \tiny $x^1y^0v_1$ & \tiny $x^2y^0v_1$ & \tiny $x^3y^0v_1$ & \tiny $x^4y^0v_1$ & \tiny $x^5y^0v_1$ & \tiny $x^6y^0v_1$ & \tiny $x^7y^0v_1$ & \\
      };
    \end{scope}
  \end{tikzpicture}
  \caption{\label{fig:good}A graphical depiction of a good generating family (the non-hatched cells) in
  the special case~$n = 2, m = 1$, writing~``$x$'' and~``$y$'' for~$x_1$ and~$x_2$. The hatched cells indicate vectors which
  have already been removed from the family. The small black squares indicate
  \emph{corners}. If the vector in the red cell will be found to be expressible
  as a linear combination of vectors with smaller index (blue cells), it will
  be removed, along with the vectors in all cells to the top and to the right
  of the red cell.}
\end{figure}

\section{Conclusion}

Commutative algebra abounds with techniques which allow us to reduce quite
general situations to easier ones. These techniques often yield short and slick
proofs; however, they come at an expense: They are typically nonconstructive in
nature, employing for instance the axiom of choice, and do not argue using only
the data at hand, but using additional auxiliary objects such as maximal
ideals. We feel that once a subject is better understood, it is desirable to
have more informative, direct proofs available which illuminate the
proven claims more clearly; similar as to how bijective proofs are preferred over
calculational inductive ones in combinatorics.

Let us consider as a specific example the statement that the existence of a linear
surjection~$A^n \to A^m$ with~$n < m$ between finite free modules over an
arbitrary ring~$A$ implies~$1 = 0 \in A$. The standard proof of this fact proceeds
by contradiction and passes to the quotient~$A/\mmm$, where~$\mmm$ is a maximal
ideal of~$A$, thereby reducing to the situation that the ring is a field. In
contrast, a direct proof such as Richman's~\cite{richman:trivial-rings}
refers only to objects mentioned in the statement itself and
explicitly tells us how to deduce the equation~$1 = 0$ from the~$m$ equations
which express that each basis vector of~$A^m$ has a preimage.

In a similar fashion, the new proof of Grothendieck's generic freeness lemma
explicitly tells us how to deduce~$1 = 0$ from the given conditional
equations expressing that~$f = 0$ is the only element with properties~(1),~(2)
and~(3). The history of Grothendieck's generic freeness lemma goes back more
than fifty years; we are slightly surprised that a direct proof was
discovered only now.

Direct proofs sometimes generalize to new situations where the reduction techniques
employed by more abstract proofs cannot be applied. This is the case for
Grothendieck's generic freeness lemma, which allows for the following
generalization:

\begin{thm}\label{thm:general-generic-freeness}
Let~$(X,\O_X)$ be a ringed space (or ringed locale, or ringed topos), such
that for every local section~$s$ of~$\O_X$, if the only open on which~$s$ is
invertible is the empty one, then~$s = 0$. Let~$\B$ be a sheaf
of~$\O_X$-algebras of finite type. Let~$\E$ be a sheaf of~$\B$-modules of finite type. Then there is a dense
open~$U$ such that over~$U$,
\begin{enumerate}
\item[(a)] $\B$ and~$\E$ are locally free as sheaves of~$\O_X$-modules,
\item[(b)] $\B$ is of finite presentation as a sheaf of~$\O_X$-algebras and
\item[(c)] $\E$ is finitely presented as a sheaf of~$\B$-modules.
\end{enumerate}
\end{thm}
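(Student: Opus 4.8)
The plan is to derive Theorem~\ref{thm:general-generic-freeness} from Theorem~\ref{thm:algebraic} by internalization. Since the proof of Theorem~\ref{thm:algebraic} is constructive, making no use of the law of excluded middle or the axiom of choice, the statement holds in the internal language of any Grothendieck topos, in particular of the topos of sheaves on $X$ and on each of its opens; the same will go through for ringed locales and ringed topoi once ``open'' is read as ``subterminal object'' and ``$\mathrm{Sh}(V)$'' as the corresponding slice.

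First I would set $U$ to be the join of all opens $V$ over which $\B$ and $\E$ are locally free as $\O_X$-modules, $\B$ is of finite presentation as an $\O_X$-algebra, and $\E$ is of finite presentation as a $\B$-module. Since all three conditions are local, (a), (b) and (c) hold over $U$, and only the density of $U$ remains. To prove it, I would fix an open $V$ with $V \wedge U = \emptyset$ and show $V = \emptyset$ by verifying, in the topos of sheaves over $V$, the hypotheses of the internal form of Theorem~\ref{thm:algebraic} for the datum $(\O_X|_V, \B|_V, \E|_V)$. Two are immediate: finite type and finite presentation for sheaves of algebras and modules agree with the corresponding internal notions, so $\B|_V$ is internally an $\O_X|_V$-algebra of finite type and $\E|_V$ an internally finitely generated $\B|_V$-module. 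For the third, I would observe that, by the Kripke--Joyal semantics, the hypothesis on $(X,\O_X)$ unwinds precisely to the internal assertion that every non-invertible element of $\O_X$ is zero (here using that invertibility of sections is a local property), and that this passes to $\O_X|_V$; moreover $\O_X|_V$ is internally nontrivial, because were $\O_X$ to vanish on a nonempty open $V'\subseteq V$, then $\B|_{V'}$ and $\E|_{V'}$ would vanish, so (a), (b), (c) would hold over $V'$, forcing $V'\subseteq V\wedge U=\emptyset$. In a nontrivial ring in which every non-invertible element vanishes, every nilpotent element is non-invertible, hence zero; so $\O_X|_V$ is internally reduced.

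The internal form of Theorem~\ref{thm:algebraic} then leaves only the task of checking that, internally over $V$, the element $f=0$ is the sole one satisfying conditions~(1), (2) and~(3); granting this, $1=0$ holds internally, i.e.\ $\O_X(V)=0$, so $V=\emptyset$ by the nontriviality just established. I would verify it by fixing an open $W\subseteq V$ and a section $f\in\O_X(W)$ for which conditions~(1), (2) and~(3) hold internally over $W$, and then unwinding these: the internal localization $\O_X|_W[f^{-1}]$ is the pushforward of $\O_X$ along the inclusion of the open $D\subseteq W$ on which $f$ becomes invertible, and under this identification conditions~(1), (2), (3) say exactly that $\B|_D$ and $\E|_D$ are locally free over $\O_X|_D$, that $\B|_D$ is a finitely presented $\O_X|_D$-algebra, and that $\E|_D$ is a finitely presented $\B|_D$-module---that is, that (a), (b) and (c) hold over $D$. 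Hence $D\subseteq V\wedge U=\emptyset$, so $f$ is invertible on no nonempty open, so $f=0$ by the hypothesis on $(X,\O_X)$.

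The hard part is precisely this last unwinding: one must carefully match internal localization at a section, internal freeness, and internal finite presentation with their external, sheaf-theoretic meanings over the open on which the section becomes invertible. This dictionary is exactly what the companion paper~\cite{blechschmidt:wlog} develops; with it in hand, the argument is short and applies uniformly to ringed spaces, ringed locales and ringed topoi.
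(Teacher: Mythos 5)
Your approach is correct but takes a genuinely different route from the paper's. The paper proves Theorem~\ref{thm:general-generic-freeness} by transporting the elementary proof of Theorem~\ref{thm:algebraic} directly to the sheaf setting: wherever that proof concludes~$a_J = 0$ by passing to the localization~$A[a_J^{-1}]$ and invoking reducedness, the adapted proof passes to the restriction over~$D(a_J)$ and invokes the hypothesis on~$(X,\O_X)$. You instead re-internalize: since Theorem~\ref{thm:algebraic} has a constructive proof it holds in the internal language of~$\mathrm{Sh}(V)$ for each open~$V$; the hypothesis on~$(X,\O_X)$ is exactly the internal field axiom $\forall s.\,\neg(\text{$s$ invertible})\Rightarrow s=0$; and together with your internal nontriviality observation (which is indeed needed to derive internal reducedness constructively, since the field axiom alone only yields $\neg\neg(s=0)$ from $s$ nilpotent) the internal theorem forces~$\O_X(V)=0$ whenever~$V\wedge U=\emptyset$. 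Your route is conceptually cleaner and reconnects with the topos-theoretic mechanism that the paper says motivated its elementary proof, at the price of outsourcing the substantive work to the internal--external dictionary of~\cite{blechschmidt:wlog}; the paper's adaptation avoids that machinery altogether.

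One step is stated too strongly: for a general ringed space the internal localization~$\O_X|_W[f^{-1}]$ need not coincide with the pushforward~$j_*(\O_X|_D)$ along~$j\colon D\hookrightarrow W$. This identification holds on spectra of rings (and, more generally, quasi-compact quasi-separated schemes), but already fails for the sheaf of continuous functions on~$\mathbb{R}$ with~$f$ the identity function: the stalk of the pushforward at the origin contains germs such as~$\exp(1/x^2)$ which the localization lacks. Fortunately you only need the one-way implication that forcing conditions~(1),~(2),~(3) at~$W$ yields~(a),~(b),~(c) over~$D$, and that follows by restricting to~$D$, where~$f$ is already invertible and the localization is canonically~$\O_X|_D$ itself. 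With this small repair the argument goes through.
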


\begin{proof}Our proof of Theorem~\ref{thm:algebraic} can be easily adapted to
this more general setting. Where that proof concludes~$a_J = 0$ for ring
elements~$a_J$ by considering the localized situation~$A[a_J^{-1}]$, we now
conclude~$a_J = 0$ for local sections~$a_J$ of~$\O_X$ by considering the
situation over the restriction to~$D(a_J)$. Whereas before this type of argument was
powered by the reducedness assumption on~$A$, it is not supported by the
assumption on~$X$.
We omit further details.
\end{proof}

The spectrum of a ring~$A$ is a space of the kind required by
Theorem~\ref{thm:general-generic-freeness} if and only if~$A$ is reduced,
thus Theorem~\ref{thm:general-generic-freeness} indeed
generalizes Theorem~\ref{thm:geometric}. Further examples for admissible spaces
are given by any topological, smooth or complex manifold; a corollary
of Theorem~\ref{thm:general-generic-freeness} for these examples is that
quotients of vector bundles, computed in the category of sheaves of modules,
are again vector bundles after restricting to suitable dense opens.

It is hard to say with certainty that a given proof does not generalize to a
new situation; but we do not see how this could be the case for the proofs
cited in the introduction.

\printbibliography

\end{document}